\newcounter{cst}
\def\ctel#1{C_{\refstepcounter{cst}\@bsphack
\protected@write\@auxout{}%
           {\string\newlabel{#1}{{\thecst}{\thepage}}}\thecst}}
\newcommand{\cter}[1]{C_{\ref{#1}}}
\def\N{\mathbb{N}}
\def\R{\mathbb{R}}
\def\O{\Omega}
\def\div{\mbox{div}}
\def\bu{\overline{u}}
\def\dsp{\displaystyle}
\def\n{\mathbf{n}}
\def\bpsi{{\boldsymbol \psi}}
\def\bchi{{\boldsymbol \chi}}
\title{Introduction to discrete functional analysis techniques for the numerical study of 
diffusion equations with irregular data}
\author{J. Droniou}
\address{School of Mathematical Sciences, Monash University, 
Clayton, VIC 3800, \textsc{Australia}.}
\date{\today}
\begin{document}

\maketitle

\begin{abstract}
We give an introduction to discrete functional analysis techniques for stationary and transient
diffusion equations.
We show how these techni\-ques are used to establish the convergence of various numerical schemes
without assuming non-physical regularity on the data. For simplicity of exposure, 
we mostly consider linear elliptic equations, and we briefly explain how these techniques
can be adapted and extended to non-linear time-dependent meaningful models (Navier--Stokes equations,
flows in porous media, etc.). These convergence techniques rely on discrete
Sobolev norms and the translation to the discrete setting of functional analysis results.
\end{abstract}

\tableofcontents

\section{Introduction}
\label{sec:intro}

A number of real-world problems are modelled by partial differential equations (\textsc{pde}s)
which involve some form of singularity.
For example, oil engineers deal with underground reservoirs made of
stacked geological layers with different rock properties, which translate
into discontinuous data (permeability tensor, porosity, etc.) in the corresponding mathematical model. Another example from
reservoir engineering is the modelling of wellbores; the relative scales of
the wellbores ($\sim$10-20cm in diameter) and the reservoir ($\sim$1-2km large)
justifies representing injection and production terms at the wells
by Radon measures \cite{fg00}. The mathematical analysis of \textsc{pde}s
involving singular data is challenging. The meanings of the terms in the equations
have to be re-thought; classical derivatives can no longer be used,
and weak/distribution derivatives and Sobolev spaces must be introduced \cite{brezis}.
Beyond these now well-known tools, other techniques
had to be developed for the most complex models to define appropriate notions of solutions,
and to prove their existence (and uniqueness, if possible): renormalised solutions \cite{DMOP99},
entropy solutions \cite{BBGGPV95}, monotone operators and semi-groups \cite{brezis},
elliptic and parabolic capacity \cite{DMOP99,DPP03}, etc.
The main purpose of this analysis is to ensure that the models are well-posed,
that is that they make sense from a mathematical perspective. It is rarely possible
to give explicit forms for, or even detailed qualitative behaviour of, the
solutions to the extremely complex models involved in field
applications. Precise quantitative information that can be used for
decision-making can be obtained only through numerical approximation.

The role of mathematics in obtaining accurate approximate solutions to \textsc{pde}s is twofold.
First, algorithms have to be designed to compute these solutions. But, even based on
sound reasoning, in some circumstances algorithms can fail to approximate the expected
model \cite[Chap. III, Sec. 3]{phdfaille}. Benchmarking
(testing the algorithms in well-documented cases) is useful to ensure the quality of
numerical methods, but it  cannot cover all situations that may occur in field applications.
The second role of mathematics in the numerical approximation of real-world models
is to provide rigorous
analysis of the properties and convergence of the schemes; this analysis
is not restricted to particular cases, and is essential to ensure the reliability of numerical
methods for \textsc{pde}s. 

The usual way to prove the convergence of a scheme is to establish error
estimates; if $\bu$ is the solution to the \textsc{pde} and $u_h$ is the solution provided by the
scheme (where $h$ is, for example, the mesh size), then one will try to establish a bound of the kind
\begin{equation}\label{error}
||u_h-\bu||_X \le \mathcal C h^\alpha
\end{equation}
where $||\cdot||_X$ is an adequate norm and $\alpha>0$. Such
an inequality provides an estimate on the $h$ that must be selected
in order to achieve a pre-determined accuracy of the approximation. However, major
limitations exist:
\begin{itemize}
\item Estimates of the kind \eqref{error} can be established only if the uniqueness
of the solution $\bu$ to the \textsc{pde} is known (if \eqref{error} holds, then $\bu$ is unique
and, actually, the proof of \eqref{error} often mimics a proof of uniqueness of $\bu$).
\item The constant $\mathcal C$ usually depends on higher derivatives of $\bu$ or the
\textsc{pde} data, and \eqref{error} therefore requires some regularity assumptions on the solution or data.
\end{itemize}
For many non-linear real-world models, including those from reservoir engineering
\cite{F95} and the famous Navier--Stokes equations, uniqueness
of the solution is not known unless strong regularity properties on the solution are assumed.
These properties cannot be established in field applications. Hence
convergence analysis based on error estimates is doomed to be somewhat disconnected from
applications. 
This article presents an introduction to techniques that were recently developed
to deal with this issue. These techniques enable the convergence analysis of
numerical schemes under assumptions that are compatible with real-world data and constraints.

Section \ref{sec:cvcp} details the
convergence technique on a simple linear stationary diffusion equation.
After recalling some basic energy estimates on the model, we present the
general path (in Section \ref{ssec:path}) to establish
the convergence of schemes without any regularity assumptions on the data;
this path relies on compactness techniques and \emph{discrete functional analysis} tools, translations
to the discrete setting of functional analysis results pertaining to functions
of continuous variables. Section \ref{ssec:ex} shows on two particular
schemes (two-point finite volume scheme, and non-conforming $\mathbb{P}^1$ finite element
scheme) how this path is applied in practice.
In Section \ref{sec:nl} we discuss the extension of this convergence technique
to non-linear and non-stationary models, more realistic representations of
physical phenomena. We briefly show that virtually no adaptation is required
from the technique used in the linear setting to deal with the simplest non-linear
models. We then give a brief overview of physical models whose numerical analysis
was successfully tackled using discrete functional analysis tools. These include
the Navier--Stokes equations, \textsc{pde}s involved in glaciology, models of oil recovery, models of
melting materials, etc.

\section{Convergence by compactness techniques}\label{sec:cvcp}

\subsection{Model and preliminary considerations}

Let us consider, for our initial presentation, the linear diffusion equation
\begin{equation}\label{base}
\left\{
\begin{array}{ll}
-\div(A\nabla \bu)=f&\mbox{ in $\O$},\\
\bu=0&\mbox{ on $\partial\O$}.
\end{array}\right.
\end{equation}
In the context of reservoir engineering, \eqref{base} corresponds to a steady single-phase
single-component Darcy problem with no gravitational effects \cite{DIP13-2}; $\bu$ is
the pressure and $A$
is the matrix-valued permeability field. This field is usually considered
piecewise constant (constant in each geological layer), and it is therefore discontinuous.
Equation \eqref{base} cannot be considered under the classical sense -- with 
$\div$ and $\nabla$ denoting standard derivatives -- and must be re-written
in a weak form; this form is obtained by multiplying the equation by a test function $v$ which vanishes
on $\partial\Omega$ and by using Stokes' formula \cite{brezis}:
\begin{equation}\label{basew}
\left\{\begin{array}{l}
\mbox{Find $\bu\in H^1_0(\O)$ such that:}\\
\dsp \forall v\in H^1_0(\O)\,,\;\int_\O A(x)\nabla \bu(x)\cdot\nabla v(x)d x
=\int_\O f(x)v(x)d x.
\end{array}\right.
\end{equation}
Here, $H^1_0(\O)$ is the Sobolev space of functions $v\in L^2(\O)$ (square-integrable functions,
equipped with the norm $||v||_{L^2(\O)}^2=\int_\O |v(x)|^2d x$), 
that have a weak (distribution) gradient $\nabla v$ in $L^2(\O)^d$ and a zero value (trace) on 
$\partial\O$. Under the following assumptions, all terms in \eqref{basew} are well-defined:
\begin{align}
&
\begin{array}{l}
\mbox{$\O$ is a bounded open set of $\R^d$ ($d\ge 1$) and $f\in L^2(\O)$},
\end{array}\label{hypO}\\
&\begin{array}{l}
A:\O\mapsto \mathcal M_d(\R)\mbox{ is a measurable matrix-valued mapping,}\\
\exists 0<\underline{a}\le\overline{a}<\infty\mbox{ such that }
|A(x)\xi|\le \overline{a}|\xi|\mbox{ and }A(x)\xi\cdot\xi\ge \underline{a}|\xi|^2\\
\mbox{for almost every $x\in\O$ and all $\xi\in\R^d$}.
\end{array} \label{hypa}
\end{align}
Here, $|\cdot|$ is the Euclidean norm on $\R^d$.
By taking $v=\bu$ in \eqref{basew} and by applying Cauchy-Schwarz' inequality on the right-hand side,
we find
\begin{multline}\label{est1}
\underline{a}||\,|\nabla \bu|\,||_{L^2(\O)}^2\le \int_\O A(x)\nabla\bu(x)\cdot\nabla\bu(x)d x
=\int_\O f(x)\bu(x)d x\\
\le ||f||_{L^2(\O)}||\bu||_{L^2(\O)}.
\end{multline}
Essential to the analysis of elliptic equations is Poincar\'e's inequality:
\begin{equation}\label{poinc}
\forall v\in H^1_0(\O)\,,\;||v||_{L^2(\O)}\le {\rm diam}(\O)||\,|\nabla v|\,||_{L^2(\O)}.
\end{equation}
Substituted into \eqref{est1}, this inequality leads to the following energy estimate, in which
the left-hand side defines the norm in $H^1_0(\O)$:
\begin{equation}\label{energ}
||\bu||_{H^1_0(\O)}:=||\,|\nabla\bu|\,||_{L^2(\O)}\le {\rm diam}(\O)\underline{a}^{-1}||f||_{L^2(\O)}.
\end{equation}

\subsection{General path for the convergence analysis}\label{ssec:path}

Estimate \eqref{energ} shows that $H^1_0(\O)$ is the natural energy space of Problem \eqref{base}.
This estimate is at the core of the theoretical study of \eqref{base} and its non-linear variants,
partly due to Rellich's compactness theorem \cite{brezis}.


\begin{theorem}[Rellich's compact embedding] If $\O$ is a bounded subset of $\R^d$, $d\ge 1$,
and if $(v_n)_{n\in\N}$ is bounded in $H^1_0(\O)$, then $(v_n)_{n\in\N}$ has a subsequence
that converges in $L^2(\O)$. Furthermore, any limit in $L^2(\O)$ of a subsequence of
$(v_n)_{n\in\N}$ belongs to $H^1_0(\O)$.
\label{th-rellich}\end{theorem}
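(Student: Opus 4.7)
The approach I would follow reduces the problem to the Fr\'echet--Kolmogorov compactness criterion in $L^2(\R^d)$. Since each $v_n\in H^1_0(\O)$, its extension by zero $\widetilde v_n$ to $\R^d$ lies in $H^1(\R^d)$ with $||\widetilde v_n||_{H^1(\R^d)}=||v_n||_{H^1_0(\O)}$, so $(\widetilde v_n)_{n\in\N}$ is bounded in $H^1(\R^d)$ by some constant $M$, and every $\widetilde v_n$ is supported in the fixed bounded set $\overline{\O}$. The tightness condition required by Fr\'echet--Kolmogorov is then immediate: $\int_{|x|>R}|\widetilde v_n(x)|^2\,dx=0$ as soon as $R$ exceeds the supremum of $|x|$ on $\overline{\O}$.

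The heart of the argument is the equicontinuity of translations in $L^2$, which reduces to the estimate
\begin{equation*}
||\widetilde v_n(\cdot+h)-\widetilde v_n||_{L^2(\R^d)}\le |h|\,||\,|\nabla\widetilde v_n|\,||_{L^2(\R^d)}\le M|h|,
\end{equation*}
uniform in $n$. I would prove this first for $v\in C^\infty_c(\R^d)$ using $v(x+h)-v(x)=\int_0^1\nabla v(x+th)\cdot h\,dt$ combined with Cauchy--Schwarz and Fubini, and then extend to $H^1(\R^d)$ by density. Fr\'echet--Kolmogorov then yields a subsequence of $(\widetilde v_n)$ converging in $L^2(\R^d)$, whose restrictions to $\O$ furnish the desired $L^2(\O)$-convergent subsequence.

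For the second claim, I would invoke that $(v_n)_{n\in\N}$, being bounded in the reflexive Hilbert space $H^1_0(\O)$, admits a further subsequence that converges weakly to some $v\in H^1_0(\O)$. Weak convergence in $H^1_0(\O)$ implies weak convergence in $L^2(\O)$, and uniqueness of weak $L^2$-limits forces this $v$ to coincide with the strong $L^2$-limit, proving that the latter belongs to $H^1_0(\O)$. The main technical obstacle is the translation estimate — the rest (tightness from bounded supports, reflexivity of $H^1_0(\O)$, uniqueness of limits) is soft — and this Poincar\'e-type control of $L^2$ translation differences by the gradient is the true source of compactness.
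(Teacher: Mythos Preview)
Your argument is correct and follows a standard route: extend by zero to $\R^d$, invoke the translation estimate $\|\widetilde v_n(\cdot+h)-\widetilde v_n\|_{L^2}\le |h|\,\|\nabla\widetilde v_n\|_{L^2}$ to get equicontinuity, use compact support for tightness, apply Fr\'echet--Kolmogorov, and recover the $H^1_0$ regularity of the limit via weak compactness in the reflexive space $H^1_0(\O)$. There is no gap.

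However, there is nothing to compare against: the paper does not prove this theorem. It is stated as a classical result with a reference to Brezis~\cite{brezis}, and serves only as motivation for the \emph{discrete} Rellich theorem (Theorem~\ref{th-discR}), which is the genuinely new ingredient the paper relies on. So your proposal is a valid proof of a background result that the paper simply quotes.
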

This theorem justifies the general path for a convergence analysis that
is applicable without smoothness assumption on the data or the solution, and that can
be adapted to non-linear equations. As described by Droniou \cite{review}, this path comprises
three steps:
\begin{enumerate}
\item\label{step1} Establish a priori energy estimates si\-milar to \eqref{energ} on the solutions to the scheme,
in a mesh- and scheme-dependent discrete norm that mimics the $H^1_0$ norm,
\item\label{step2} Prove a compactness result, discrete equivalent of Theorem \ref{th-rellich}:
if $(u_h)_h$ is a sequence of discrete functions that are
bounded in the norms introduced in Step~\ref{step1}, then as the mesh size $h$
goes to zero there is a subsequence of $(u_h)_h$ that converges (at least in $L^2(\O)$)
to a function $\bu\in H^1_0(\Omega)$,
\item\label{step3} Prove that if $\bu\in H^1_0(\O)$ is the limit in $L^2(\O)$ as $h\to 0$
of solutions to the scheme, then $\bu$ satisfies \eqref{basew}.
\end{enumerate}

\begin{remark}
The existence of a solution to the \textsc{pde} does not need to
be known. It is obtained as a consequence of the convergence proof.
\end{remark}

The discrete $H^1_0(\O)$ norm is dictated by the scheme. It must be a norm for which
(i) a priori estimates on the numerical solutions can be obtained, and (ii)
the compactness result in Convergence Step \ref{step2} holds. There is however
a norm applicable to a number of numerical methods.
Let us assume that $\O$ is polytopal (polygonal in 2D, polyhedral in 3D, etc.),
and that $\mathcal M$ is a mesh of $\O$ made of polytopal cells. We denote by
$h_{\mathcal M}=\max_{K\in\mathcal M}{\rm diam}(K)$ the size of $\mathcal M$,
and by $X_{\mathcal M}$ the space of piecewise constant functions in the cells.
We identify $v\in X_{\mathcal M}$ with the family of its values $(v_K)_{K\in\mathcal M}$ in the cells.
$\mathcal E_{\mathcal M}$ is the set of all faces of the mesh (edges in 2D),
and $|\sigma|$ denotes the $(d-1)$-dimensional measure of a face $\sigma$ (i.e. length
in 2D, area in 3D).
We take one point $x_K$ in each cell $K$, and we let $d_{K,\sigma}={\rm dist}(x_K,\sigma)$
(see Figure \ref{fig:mesh}).
If $\sigma$ is an interface between two cells $K$ and $L$, then we define
$d_\sigma=d_{K,\sigma}+d_{L,\sigma}$; otherwise, $d_\sigma=d_{K,\sigma}$
with $K$ the unique cell whose $\sigma$ is an face. 

\begin{figure}[h]
\begin{center}
\input{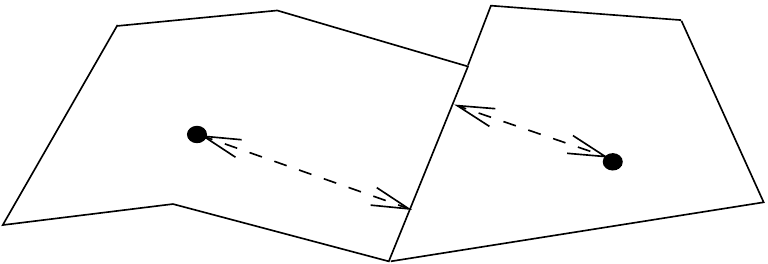_t}
\caption{\label{fig:mesh}Notations associated with a polytopal mesh.}
\end{center}
\end{figure}

A discrete $H^1_0$ norm on $X_{\mathcal M}$ is defined by
\begin{equation}\label{discnorm}
||v||_{H^1_0,\mathcal M}^2:=\sum_{\sigma\in\mathcal E_{\mathcal M}} 
|\sigma| d_{\sigma}\left(\frac{v_K-v_L}{d_{\sigma}}\right)^2.
\end{equation}
Here, and in subsequent similar sums, we use the convention that
$K$ and $L$ are the cells on each side of $\sigma$, and that $v_L=0$
if $\sigma\subset \partial\O$ is a face of $K$. This
choice accounts for the homogeneous boundary conditions on $\partial \O$.

The major interest of the discrete $H^1_0$ norm, in view of the convergence steps
\ref{step1}--\ref{step3},
is appa\-rent in the two following theorems, proved by Eymard et al. \cite{EYM10}.
Theorem \ref{th-discP} is the key to reproduce at the discrete level the sequence
of inequalities \eqref{est1}--\eqref{energ} leading to the energy estimates
mentioned in Convergence Step \ref{step1} this requires suitable \emph{coercivity}
properties of the scheme. Theorem \ref{th-discR} covers Convergence Step \ref{step2}. Convergence step \ref{step3}
is more scheme-dependent, and relies on \emph{consistency} and \emph{limit-conformity} properties of the scheme.
Theorems \ref{th-discP} and \ref{th-discR} are examples of
discrete functional analysis results.

\begin{theorem}[Discrete Poincar\'e's inequality]\label{th-discP}
Let $\mathcal M$ be a mesh of $\O$ and set
\begin{equation}\label{defreg}
\theta_{\mathcal M}=\max\left\{\frac{d_{K,\sigma}}{d_{L,\sigma}}\,:\,
\sigma\in\mathcal E_{\mathcal M}\,,\;\mbox{$K,L$ cells on each side of $\sigma$}\right\}.
\end{equation}
If $\overline{\theta}\ge \theta_{\mathcal M}$, then there exists
$\ctel{cstP}$ only depending on $\overline{\theta}$ such that for any
$v\in X_{\mathcal M}$ we have $||v||_{L^2(\O)}\le \cter{cstP}||v||_{H^1_0,\mathcal M}$.
\end{theorem}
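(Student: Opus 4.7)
The plan is to adapt the classical ray argument of Eymard--Gallou\"et--Herbin. I extend each $v\in X_{\mathcal M}$ by zero outside $\O$ and fix a unit direction $e\in\R^d$. For each $\sigma\in\mathcal E_{\mathcal M}$, let $\chi_\sigma^e(x)=1$ if the half-line $\{x+te:t\ge 0\}$ crosses $\sigma$, and $0$ otherwise. Since the ray eventually exits $\O$ (where $v\equiv 0$), expressing $v(x)$ as a telescoping sum of jumps across the faces traversed by this ray gives the pointwise bound
\begin{equation*}
|v(x)|\le \sum_{\sigma\in\mathcal E_{\mathcal M}} \chi_\sigma^e(x)\,|v_K-v_L|,
\end{equation*}
with the convention $v_L=0$ on boundary faces.

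I then square this and apply a weighted Cauchy--Schwarz, splitting $|v_K-v_L|$ as $\sqrt{d_\sigma|\mathbf n_\sigma\cdot e|}\cdot|v_K-v_L|/\sqrt{d_\sigma|\mathbf n_\sigma\cdot e|}$ to obtain
\begin{equation*}
|v(x)|^2\le \Bigl(\sum_\sigma \chi_\sigma^e(x)\, d_\sigma|\mathbf n_\sigma\cdot e|\Bigr)\Bigl(\sum_\sigma \chi_\sigma^e(x)\,\frac{(v_K-v_L)^2}{d_\sigma|\mathbf n_\sigma\cdot e|}\Bigr).
\end{equation*}
The first parenthesis is a geometric total length of the ray through the mesh, weighted by projections on $e$, and I would bound it uniformly by $C(\overline\theta)\,{\rm diam}(\O)$; this is where the regularity parameter intervenes, through the comparison of $d_\sigma$ with $d_{K,\sigma}$ and $d_{L,\sigma}$. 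Integrating over $x\in\O$ and using the \emph{backward tube} bound $\int_\O \chi_\sigma^e(x)\,dx\le |\sigma|\,|\mathbf n_\sigma\cdot e|\,{\rm diam}(\O)$ (the projection of the cylinder behind $\sigma$ in the direction $-e$), the inconvenient factors $|\mathbf n_\sigma\cdot e|$ cancel in the remaining sum, leaving precisely $||v||_{H^1_0,\mathcal M}^2$ up to a constant depending on $\overline\theta$, $\O$ and $d$.

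The principal obstacle is the behaviour of $|\mathbf n_\sigma\cdot e|$: it degenerates for faces nearly parallel to the fixed direction $e$, so the weighted Cauchy--Schwarz above is only informative where $|\mathbf n_\sigma\cdot e|>0$. The standard remedy is either to first average the pointwise inequality over $e\in S^{d-1}$, replacing $|\mathbf n_\sigma\cdot e|$ by its spherical mean (a strictly positive dimensional constant), or to partition $\mathcal E_{\mathcal M}$ into $d$ groups according to the dominant component of $\mathbf n_\sigma$ and run the argument separately with the corresponding coordinate direction in each group. After this adjustment, the constant $\cter{cstP}$ depends only on $\overline\theta$ (and implicitly on $\O$ and $d$), as claimed.
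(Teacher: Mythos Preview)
The paper does not prove this theorem in the text; it is attributed to Eymard--Gallou\"et--Herbin \cite{EYM10}, and a later remark states that the ``most efficient proofs of the discrete Poincar\'e's and Rellich's theorems actually use the discrete Sobolev embeddings''. So the route the paper has in mind is: first establish the $W^{1,1}_0\hookrightarrow L^{d/(d-1)}$ embedding by the Gagliardo--Nirenberg product over the $d$ coordinate directions (a purely dimensional constant, no mesh regularity needed, since the discrete $W^{1,1}_0$ seminorm is exactly $\sum_\sigma|\sigma|\,|v_K-v_L|$), then bootstrap to $p=2$ by applying this to a power of $|v|$; the $\overline\theta$ dependence enters only in that last step, via $d_{L,\sigma}\le\overline\theta\,d_{K,\sigma}$ and the pyramid identity $\sum_{\sigma\in\mathcal E_K}|\sigma|\,d_{K,\sigma}=d\,|K|$. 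Your single-ray argument with a weighted Cauchy--Schwarz is the older, more direct route from the Eymard--Gallou\"et--Herbin book, and is genuinely different.

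The weak point in your sketch is the bound on the first Cauchy--Schwarz factor,
\[
\sum_{\sigma}\chi_\sigma^e(x)\,d_\sigma\,|\mathbf n_\sigma\cdot e|\le C(\overline\theta)\,{\rm diam}(\O).
\]
For an \emph{admissible} mesh, where $x_L-x_K$ is parallel to $\mathbf n_\sigma$, one has $d_\sigma\,(\mathbf n_\sigma\cdot e)=(x_L-x_K)\cdot e$; since the ray advances monotonically in $e$, the sum telescopes and the bound follows with constant~$1$ --- no $\theta$ required. But Theorem~\ref{th-discP} makes no orthogonality hypothesis, and once $x_L-x_K$ is not aligned with $\mathbf n_\sigma$ the telescoping is lost. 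The quantity $\theta_{\mathcal M}$ only compares $d_{K,\sigma}$ with $d_{L,\sigma}$ across a shared face; it does not relate $d_\sigma$ to the length of the ray inside the neighbouring cells, and so does not by itself control the sum above. Your phrase ``this is where the regularity parameter intervenes'' does not supply the missing mechanism. The cleanest repair is to abandon the single-direction weighted Cauchy--Schwarz in favour of the $d$-direction product bound --- which is precisely the Sobolev-embedding proof the paper points to.
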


\begin{theorem}[Discrete Rellich's theorem]\label{th-discR}
Let $(\mathcal M_n)_{n\in\N}$ be a sequence of discretisations of $\O$ such
that $(\theta_{\mathcal M_n})_{n\in\N}$ is bounded and $h_{\mathcal M_n}\to 0$
as $n\to\infty$. If $v_n\in X_{\mathcal M_n}$
is such that $(||v_n||_{H^1_0,\mathcal M_n})_{n\in\N}$ is bounded, then
$(v_n)_{n\in\N}$ is relatively compact in $L^2(\O)$.
Furthermore, any limit in $L^2(\O)$ of a subsequence of $(v_n)_{n\in\N}$ belongs to
$H^1_0(\O)$.
\end{theorem}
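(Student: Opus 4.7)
The plan is to reproduce, at the discrete level, the classical route to Rellich's theorem via the Kolmogorov--Riesz--Fr\'echet compactness criterion. I will first extend each $v_n$ by zero outside $\O$, denoting the extension $\widetilde{v_n}$; the discrete Poincar\'e inequality (Theorem~\ref{th-discP}) immediately provides a uniform bound on $||\widetilde{v_n}||_{L^2(\R^d)}=||v_n||_{L^2(\O)}$. The core of the proof is then the uniform-in-$n$ translation estimate
\begin{equation}\label{transl}
\forall \xi\in\R^d\,,\quad \int_{\R^d}|\widetilde{v_n}(x+\xi)-\widetilde{v_n}(x)|^2\,dx
\le C(\overline\theta)\,|\xi|\bigl(|\xi|+h_{\mathcal M_n}\bigr)\,||v_n||_{H^1_0,\mathcal M_n}^2,
\end{equation}
which, combined with the $L^2$ bound, delivers the relative compactness in $L^2(\O)$ via Kolmogorov--Riesz--Fr\'echet: the right-hand side goes to $0$ as $|\xi|\to 0$ uniformly in $n$, since $h_{\mathcal M_n}$ and $||v_n||_{H^1_0,\mathcal M_n}$ are bounded.

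To establish \eqref{transl}, I would fix $\xi$ and write, for almost every $x\in\R^d$,
\begin{equation*}
\widetilde{v_n}(x+\xi)-\widetilde{v_n}(x)=\sum_{\sigma\in\mathcal E_{\mathcal M_n}}\chi_\sigma(x,\xi)\,(v_L-v_K),
\end{equation*}
as the telescoping sum of jumps across the faces $\sigma$ intersected by the segment $[x,x+\xi]$, where boundary faces are included via the convention $v_L=0$ (which matches the zero extension). A weighted Cauchy--Schwarz inequality with weights $d_\sigma$ and $1/d_\sigma$ then separates a geometric factor $\sum_\sigma \chi_\sigma(x,\xi) d_\sigma$, of order $|\xi|+h_{\mathcal M_n}$ by a walk through the cells crossed (with $\overline\theta$ bounding the ratios $d_{K,\sigma}/d_{L,\sigma}$), from a jump factor $\sum_\sigma \chi_\sigma(x,\xi)(v_L-v_K)^2/d_\sigma$. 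Integrating in $x$ and using the elementary identity $\int_{\R^d}\chi_\sigma(x,\xi)\,dx\le |\sigma|\,|\xi|$ assembles the remaining terms into $\sum_\sigma |\sigma|d_\sigma(v_K-v_L)^2/d_\sigma^2=||v_n||_{H^1_0,\mathcal M_n}^2$, yielding \eqref{transl}.

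For the second statement, I would pass to the limit $n\to\infty$ in \eqref{transl} along a subsequence such that $\widetilde{v_n}\to\widetilde{\bu}$ in $L^2(\R^d)$. Since $h_{\mathcal M_n}\to 0$ and $||v_n||_{H^1_0,\mathcal M_n}$ stays bounded, the limit reads
\begin{equation*}
\int_{\R^d}|\widetilde{\bu}(x+\xi)-\widetilde{\bu}(x)|^2\,dx\le C'|\xi|^2\quad\mbox{for all }\xi\in\R^d.
\end{equation*}
This is the standard characterisation of $H^1(\R^d)$, so $\widetilde{\bu}\in H^1(\R^d)$; since $\widetilde{\bu}$ vanishes outside $\O$, we obtain $\bu\in H^1_0(\O)$.

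The main obstacle is the derivation of \eqref{transl}: the segment-crossing bookkeeping (how many faces a translated segment meets, how the $d_\sigma$'s along the segment add up to roughly its length, and how $\overline\theta$ enters when comparing $d_{K,\sigma}$ and $d_{L,\sigma}$ on either side of a crossed face) is the delicate geometric step, and the handling of boundary contributions through the zero extension must be tracked carefully so that the jumps against the $0$-valued exterior fit the convention used in \eqref{discnorm}.
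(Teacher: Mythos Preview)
The paper does not actually prove Theorem~\ref{th-discR}; it states the result and attributes the proof to Eymard et al.~\cite{EYM10}, so there is no in-paper argument to compare against line by line. Your proposal follows the standard Kolmogorov--Riesz--Fr\'echet route (translation estimates on the zero extension, then the $H^1$ characterisation of the limit), which is precisely the strategy used in the original finite-volume literature and in \cite{EYM10}; the sketch is sound and the obstacle you flag --- the geometric control of $\sum_\sigma \chi_\sigma(x,\xi)\,d_\sigma$ in terms of $|\xi|+h_{\mathcal M_n}$ and $\overline\theta$ --- is indeed the only place requiring real work.

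One point worth noting: the paper later remarks (Section~\ref{sec:nl}) that ``the most efficient proofs of the discrete Poincar\'e's and Rellich's theorems actually use the discrete Sobolev embeddings'' \cite{EYM10,koala}. That alternative route first establishes the embedding $||v||_{L^{p^*}(\O)}\le C\,||v||_{W^{1,p}_0,\mathcal M}$ (whose proof, via a discrete Nirenberg-type argument, already contains the translation bookkeeping), and then derives compactness in $L^2(\O)$ from boundedness in a higher $L^q$ together with a weaker translation estimate. What this buys is a single geometric lemma that simultaneously yields Poincar\'e, Sobolev and Rellich, rather than proving each separately; your direct approach is perfectly valid but slightly less economical in that sense. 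A minor technical caveat in your write-up: the telescoping identity should carry a sign depending on the orientation of the crossing of $\sigma$, so one usually works with $|v_K-v_L|$ from the outset or with a signed indicator; this does not affect the estimate since only absolute values enter after Cauchy--Schwarz.
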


\subsection{Examples}\label{ssec:ex}

Besides Theorems \ref{th-discP} and \ref{th-discR}, an important feature
of the discrete norm \eqref{discnorm} is its versatility;
it is suitable for numerous schemes, even with degrees of freedom that are not cell-centred.
Here we give a practical illustration, using two methods, of the usage
of Convergence Steps \ref{step1}--\ref{step3} and of the discrete norm \eqref{discnorm}.

\subsubsection{Two-point flux approximation finite volume scheme}\label{sec:tpfa}

The two-point flux approximation (\textsc{tpfa}) scheme for \eqref{base} is given by flux balances (obtained by
integrating \eqref{base} over the cells), and a finite difference approximation of
the flux $-\int_\sigma A(x)\nabla \bu(x)\cdot\n_K(x)d x$
using the two unknowns on each side of $\sigma$:
\begin{align}\label{balflux}
\forall K\in\mathcal M\,&:\,\sum_{\sigma\in\mathcal E_K} F_{K,\sigma}=\int_K f(x)d x,\\
\label{fluxtpfa}
\forall K\in\mathcal M\,,\;\forall\sigma\in\mathcal E_K\,&:\,
F_{K,\sigma}=\tau_\sigma (u_K-u_L).
\end{align}
Here, $\mathcal E_K$ is the set of faces of a cell $K\in\mathcal M$, and
the transmissivity $\tau_\sigma\in (0,\infty)$ depends on
$A$ and the local mesh geometry \cite{EGH00}. Under usual non-degeneracy assumptions on the
mesh, there exists $\ctel{ctaus}>0$ only depending on $\overline{a}$ and $\underline{a}$ such that
\begin{equation}\label{taus}
\tau_\sigma \ge \cter{ctaus}\frac{|\sigma|}{d_{\sigma}}.
\end{equation}

\paragraph{Convergence Step \ref{step1}}

The inequalities \eqref{est1}--\eqref{energ} that lead to
the a priori estimates on $\bu$ are obtained by the following sequence of
manipulations: (i) multiply \eqref{base}
by $v=\bu$ and integrate the resulting equation, (ii) apply Stokes' formula,
and (iii) use Poincar\'e's inequality.
Since the flux balance \eqref{balflux} is the discrete expression of \eqref{base}, we
reproduce these manipulations at the discrete level.

\begin{enumerate}
\item[(i)] \emph{Multiply and integrate}: we multiply \eqref{balflux} by $v_K=u_K$ and we sum on $K\in\mathcal M$. Accounting for \eqref{fluxtpfa} this gives
\begin{equation}\label{est0d}
\sum_{K\in\mathcal M}\sum_{\sigma\in \mathcal E_K} \tau_\sigma(u_K-u_L)u_K=
\sum_K \int_K f(x)d x\, u_K = \int_\O f(x)u(x)d x.
\end{equation}
\item[(ii)] \emph{Apply Stokes' formula}: this consists of gathering by faces the sum in the left-hand side
of \eqref{est1d}. The contributions of a face are
$\tau_\sigma(u_K-u_L)u_K$ and $\tau_\sigma(u_L-u_K)u_L=-\tau_\sigma(u_K-u_L)u_L$. Hence, using \eqref{taus}
and Cauchy-Schwarz' inequality on the right-hande side, we find
\begin{equation}\label{est1d}
\cter{ctaus}\sum_{\sigma\in\mathcal E_{\mathcal M}} \frac{|\sigma|}{d_{\sigma}}(u_K-u_L)^2\le
\sum_{\sigma\in\mathcal E_{\mathcal M}} \tau_\sigma(u_K-u_L)^2
\le ||f||_{L^2(\O)}||u||_{L^2(\O)}.
\end{equation}
\item[(iii)] \emph{Use Poincar\'e's inequality}: the left-hand side of \eqref{est1d} is
$\cter{ctaus}||u||_{H^1_0,\mathcal M}^2$. Invoking the discrete Poincar\'e's inequality (Theorem \ref{th-discP}),
we find $\ctel{csttpfa}$ only depending on an upper bound of $\theta_{\mathcal M}$ such that
\begin{equation}
||u||_{H^1_0,\mathcal M}\le \cter{csttpfa}||f||_{L^2(\O)}.
\label{energtpfa}\end{equation}
Estimate \eqref{energtpfa} is the discrete equivalent of \eqref{energ} for the solution
of the \textsc{tpfa} scheme.
\end{enumerate}

\paragraph{Convergence Step \ref{step2}}

This step is straightforward from \eqref{energtpfa} by using the discrete
Rellich's theorem. This estimate shows that if $(\mathcal M_n)_{n\in\N}$ is a sequence
of meshes as in Theorem \ref{th-discR} and if
$u_n$ is the solution of the \textsc{tpfa} scheme on $\mathcal M_n$, then $(||u_n||_{H^1_0,\mathcal M_n})_{n\in\N}$ remains bounded. Hence, up to a subsequence,
$u_n$ converges in $L^2(\O)$ towards some function $\bu\in H^1_0(\O)$.

\paragraph{Convergence Step \ref{step3}}

As mentioned above, proving that $\bu$ is the solution to \eqref{basew} hinges on
adequate consistency properties enjoyed by the scheme. Here, it all comes to 
the proper choice of transmissivities $\tau_\sigma$, and to the geometry of the
mesh. By taking $\varphi\in C^\infty_c(\O)$,
multiplying \eqref{balflux} for $\mathcal M=\mathcal M_n$ by $\varphi(x_K)$, and summing
over all $K$ we find
\[
\sum_{K\in\mathcal M_n}\sum_{\sigma\in\mathcal E_K}
\tau_\sigma  [(u_n)_K-(u_n)_L]\varphi(x_K) = \sum_{K\in\mathcal M_n}
\int_K f(x)\varphi(x_K)d x.
\]
We then gather the sums in the left-hand side by terms involving $(u_n)_K$:
\begin{equation}
\sum_{K\in\mathcal M_n}(u_n)_K\sum_{\sigma\in\mathcal E_K}
\tau_\sigma [\varphi(x_K)-\varphi(x_L)] = \sum_{K\in\mathcal M_n}
\int_K f(x)\varphi(x_K)d x
\label{cons1}\end{equation}
where $\varphi(x_L)=0$ if $\sigma\in\mathcal E_K$ lies on $\partial\O$.
The choice of $\tau_\sigma$, the geometrical assumptions constraining the
meshes for the \textsc{tpfa} method (that is, an orthogonality requirement of $(x_Kx_L)$
and $\sigma$ for a scalar product induced by $A^{-1}$), and the smoothness of $\varphi$ ensure that
$\sum_{\sigma\in\mathcal E_K}\tau_\sigma [\varphi(x_K)-\varphi(x_L)]
=-\int_K \div(A\nabla \varphi) + |K|\mathcal O(h_{\mathcal M_n})$,
where $|K|$ is the $d$-dimensional measure of $K$.
Relation \eqref{cons1} thus gives
\[
-\int_\O u_n(x) \div(A\nabla\varphi)(x)d x
+\mathcal O(||u_n||_{L^1(\O)}h_{\mathcal M_n})= \int_\O
f(x)\varphi(x)d x+\mathcal O(h_{\mathcal M_n}),
\]
where we used the smoothness of $\varphi$ in the right-hand side.
By the convergence of $u_n$ to $\bu$ in $L^2(\O)$, in the limit
$n\to\infty$ we find that $\bu\in H^1_0(\O)$ satisfies the following
property, classically equivalent to \eqref{basew}:
\[
\forall\varphi\in C^\infty_c(\O)\,,\;-\int_\O \bu(x) \div(A\nabla\varphi)(x)d x=\int_\O 
f(x)\varphi(x)d x.
\]

\begin{remark} The above reasoning apparently only shows the convergence of 
a \emph{subsequence} of $(u_n)_{n\in\N}$. However, since there is only one possible
limit (namely, the unique solution $\bu$ to \eqref{basew}),
this actually proves that the whole sequence $(u_n)_{n\in\N}$ converges to $\bu$.
\end{remark}

\subsubsection{Non-conforming $\mathbb{P}^1$ finite element}\label{sec:cr}

Usage of the discrete norm \eqref{discnorm} is not limited to numerical methods
with only/primarily cell unknowns. Let us consider a triangulation $\mathcal T$
of 2D poly\-gonal domain $\O$ (what follows also generalises to tetrahedral meshes
of a 3D polyhedral domain). The non-conforming Crouzeix-Raviart 
$\mathbb{P}^1$ finite ele\-ment \cite{CR} for \eqref{base} has degrees of freedom at the midpoints
$(\overline{x}_\sigma)_{\sigma\in\mathcal E_{\mathcal T}}$ of the triangulation's
edges. The discrete space $Y_{\mathcal T}$ of unknowns is
made of families of reals $u=(u_{\sigma})_{\sigma\in\mathcal E_{\mathcal T}}$, where
$u_\sigma=0$ if $\sigma\subset \partial\O$. These families are
identified with functions $u:\O\to \R$ that are piecewise linear on the mesh,
with values $(u_\sigma)_{\sigma\in\mathcal E_{\mathcal T}}$ at
$(\overline{x}_\sigma)_{\sigma\in\mathcal E_{\mathcal T}}$.
The non-conforming $\mathbb{P}^1$ approximation of \eqref{basew} is
\begin{equation}\label{basecr}
\left\{\begin{array}{l}
\mbox{Find $u\in Y_{\mathcal T}$ such that:}\\
\dsp \forall v\in Y_{\mathcal T}\,,\;\int_\O A(x)\nabla_b u(x)\cdot\nabla_b v(x)d x
=\int_\O f(x)v(x)d x
\end{array}\right.
\end{equation}
where $\nabla_b$ is the broken gradient: $(\nabla_b u)_{|K}$ is the constant gradient
of the linear function $u$ in the triangle $K\in\mathcal T$.

\paragraph{Convergence Step \ref{step1}}

To benefit from Theorems \ref{th-discP} and \ref{th-discR}, we need
to introduce the norm \eqref{discnorm}, which requires some choice of
cell unknowns. Here, the most natural choice is to set $u_K$ as the
value of $u$ at the centre of gravity $\overline{x}_K$ of $K$; since $u$ is linear
in $K$, this gives
\[
\forall K\in\mathcal T\,,\quad u_K=u(\overline{x}_K)=\frac{1}{3}\sum_{\sigma\in\mathcal E_K}u_\sigma.
\]
This choice associates (in a non-injective way) to each $u\in Y_{\mathcal T}$ a
$\widetilde{u}=(u_K)_{K\in\mathcal T}\in X_{\mathcal T}$. Two simple inequalities, both based on
the linearity of $u$ inside each triangle, will be useful to conclude
Convergence Step \ref{step1}.

\begin{lemma} Let $\eta_{\mathcal T}$ be the maximum over $K\in\mathcal T$
of the ratio of the exterior diameter of $K$ over the interior diameter of $K$. Assume
that $\overline{\eta}\ge \eta_{\mathcal T}$. Then there exists $\ctel{cstCR}$ only depending
on $\overline{\eta}$ such that, for all $u\in Y_{\mathcal T}$,
\begin{align}\label{estcr3}
||\widetilde{u}||_{H^1_0,\mathcal T} &\le \cter{cstCR}||\,|\nabla_b u|\,||_{L^2(\O)},\\
||\widetilde{u}-u||_{L^2(\O)}&\le h_{\mathcal T} ||\,|\nabla_b u|\,||_{L^2(\O)}.
\label{estcr4}\end{align}
\end{lemma}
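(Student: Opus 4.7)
The plan is to prove the two inequalities separately, starting with the easier $L^2$ bound \eqref{estcr4}. Since $u$ is affine on each triangle $K$ with $u(\overline{x}_K)=u_K$, for every $x\in K$ one has $u(x)-u_K=\nabla_b u(x)\cdot(x-\overline{x}_K)$. Using $|x-\overline{x}_K|\le\mathrm{diam}(K)\le h_{\mathcal T}$, squaring and integrating over $K$ gives
\[
\int_K |u(x)-u_K|^2 dx\le h_{\mathcal T}^2\int_K |\nabla_b u(x)|^2 dx,
\]
and summing over $K\in\mathcal T$ yields \eqref{estcr4} with constant $1$; no shape regularity is needed.

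For \eqref{estcr3}, the key ingredient is that $u$ is \emph{continuous at each edge midpoint} (the defining Crouzeix--Raviart property): if $\sigma$ is the interface between $K$ and $L$, then $u_\sigma$ coincides with the value of the affine pieces of $u$ on $K$ and on $L$ evaluated at $\overline{x}_\sigma$. Writing $g_K$ for the constant value of $\nabla_b u$ on $K$, this gives $u_\sigma-u_K=g_K\cdot(\overline{x}_\sigma-\overline{x}_K)$ and similarly for $L$, so
\[
u_K-u_L = g_L\cdot(\overline{x}_\sigma-\overline{x}_L) - g_K\cdot(\overline{x}_\sigma-\overline{x}_K).
\]
Applying $(a+b)^2\le 2(a^2+b^2)$ and Cauchy--Schwarz, then inserting into \eqref{discnorm}, reduces \eqref{estcr3} to bounding each weight $\frac{|\sigma|}{d_\sigma}|\overline{x}_\sigma-\overline{x}_K|^2$ by a constant (depending only on $\overline\eta$) times $|K|$; after that, regrouping the sum by triangle (each triangle contributes to at most three faces) immediately produces $||\widetilde{u}||_{H^1_0,\mathcal T}^2\le C||\,|\nabla_b u|\,||_{L^2(\O)}^2$.

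The hard part is therefore the purely geometric estimate
\[
\frac{|\sigma|}{d_\sigma}\,|\overline{x}_\sigma-\overline{x}_K|^2\le C|K|
\]
with $C$ depending only on $\overline{\eta}$. I would combine four elementary facts: (i) $|\overline{x}_\sigma-\overline{x}_K|\le\mathrm{diam}(K)$; (ii) since $\overline{x}_K$ is the centroid, $d_{K,\sigma}=h_{K,\sigma}/3$, and the inscribed ball of $K$ (of diameter $\rho^{\rm in}_K$) must fit between $\sigma$ and the opposite vertex so that $h_{K,\sigma}\ge\rho^{\rm in}_K$; together with $\mathrm{diam}(K)\le\overline{\eta}\,\rho^{\rm in}_K$ this yields $\mathrm{diam}(K)\le 3\overline{\eta}\,d_{K,\sigma}$; (iii) the centroid identity $|K|=\frac{3}{2}|\sigma|d_{K,\sigma}$ for a triangle; and (iv) $d_\sigma\ge d_{K,\sigma}$. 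Chaining them gives $\frac{|\sigma|}{d_\sigma}|\overline{x}_\sigma-\overline{x}_K|^2\le 9\overline{\eta}^{\,2}|\sigma|d_{K,\sigma}=6\overline{\eta}^{\,2}|K|$. Boundary edges are treated identically using the conventions $u_\sigma=0$ (Crouzeix--Raviart) and $u_L=0$ (discrete norm): one then has $u_K-u_L=u_K=-g_K\cdot(\overline{x}_\sigma-\overline{x}_K)$, and only the $K$-contribution is present.
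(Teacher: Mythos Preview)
Your proof is correct and follows essentially the same route as the paper's: for \eqref{estcr4} both use $u(x)-u_K=(\nabla_b u)_{|K}\cdot(x-\overline{x}_K)$, and for \eqref{estcr3} both split $u_K-u_L$ through the edge-midpoint value $u_\sigma$ (via the Crouzeix--Raviart continuity), bound each piece by the local broken gradient, and finish with a shape-regularity estimate relating the face weights to $|K|$. The only cosmetic difference is that the paper packages the geometry as ${\rm dist}(\overline{x}_K,\overline{x}_\sigma)\le C\,d_\sigma$ together with $\sum_{\sigma\in\mathcal E_K}|\sigma|d_\sigma\le C|K|$, whereas you combine these into the single bound $\frac{|\sigma|}{d_\sigma}|\overline{x}_\sigma-\overline{x}_K|^2\le 6\overline{\eta}^{\,2}|K|$ and track the constant explicitly.
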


\begin{proof}
Start with \eqref{estcr3}. There exists $\ctel{cstcr1}$ only
depending on $\overline{\eta}$ such that for all $\sigma\in\mathcal K$
we have ${\rm dist}(\overline{x}_K,\overline{x}_\sigma)\le \cter{cstcr1} d_\sigma$.
Hence, since $u$ is linear inside each triangle,
\begin{eqnarray}
\frac{|\widetilde{u}_K-\widetilde{u}_L|}{d_\sigma}&\le& \cter{cstcr1}
\frac{|u(\overline{x}_K)-u(\overline{x}_\sigma)|}{\mbox{dist}(\overline{x}_K,\overline{x}_\sigma)}
+\cter{cstcr1}\frac{|u(\overline{x}_L)-u(\overline{x}_\sigma)|}{\mbox{dist}(\overline{x}_L,\overline{x}_\sigma)}\nonumber\\
&\le& \cter{cstcr1}|(\nabla_b u)_{|K}|+\cter{cstcr1}|(\nabla_b u)_{|L}|
\label{estcr2}\end{eqnarray}
By squaring \eqref{estcr2}, multiplying by $|\sigma|d_\sigma$, summing over the
edges and using $\sum_{\sigma\in\mathcal E_K}|\sigma|d_\sigma
\le \ctel{cstcr2}|K|$ with $\cter{cstcr2}$ only depending on $\overline{\eta}$, we obtain \eqref{estcr3}.
The proof of \eqref{estcr4} is even simpler and follows directly from the fact that
$\widetilde{u}(x)-u(x)=u(\overline{x}_K)-u(x)=
(\nabla_b u)_{|K}\cdot(\overline{x}_K-x)$ for all $x\in K$. \end{proof}

Equipped with \eqref{estcr3} and \eqref{estcr4}, we now delve into
Convergence Step \ref{step1}. Substituting $v=u$ in the formulation \eqref{basecr} of the scheme,
the coercivity of $A$ entails
\[
\underline{a}||\,|\nabla_b u|\,||_{L^2(\O)}^2\le ||f||_{L^2(\O)}||u||_{L^2(\O)}.
\]
Using \eqref{estcr4} and $h_{\mathcal T}\le \mbox{diam}(\O)$,
this gives
\[
\underline{a}||\,|\nabla_b u|\,||_{L^2(\O)}^2\le ||f||_{L^2(\O)}(||\widetilde{u}||_{L^2(\O)}
+\mbox{diam}(\O)||\,|\nabla_b u|\,||_{L^2(\O)}).
\]
A bound on $\eta_{\mathcal T}$ implies a bound on $\theta_{\mathcal T}$ (defined by \eqref{defreg}).
Hence, the discrete Poincar\'e's inequality (Theorem \ref{th-discP}) and \eqref{estcr3}
lead to
\begin{equation}\label{energcr}
||\,|\nabla_b u|\,||_{L^2(\O)}\le (\cter{cstP}\cter{cstCR}+\mbox{diam}(\O))\underline{a}^{-1}
||f||_{L^2(\O)}.
\end{equation}
Estimate \eqref{energcr} is the discrete equivalent of the energy estimate \eqref{energ}. In conjunction
with \eqref{estcr3} it gives
\begin{equation}\label{energcr2}
||\widetilde{u}||_{H^1_0,\mathcal T}\le \cter{cstCR}(\cter{cstP}\cter{cstCR}+\mbox{diam}(\O))
\underline{a}^{-1}
||f||_{L^2(\O)}.
\end{equation}

\paragraph{Convergence Step \ref{step2}}

This is similar to the same step in the \textsc{tpfa} method. If $(\mathcal T_n)_{n\in\N}$
is a sequence of uniformly regular triangulations whose size tends to zero, then 
combining \eqref{energcr2} (with $\mathcal T=\mathcal T_n$) and the discrete Rellich's theorem (Theorem \ref{th-discR})
shows that $\widetilde{u}_n\to \bu$ in $L^2(\O)$ up to a subsequence,
for some $\bu\in H^1_0(\O)$. Moreover, by \eqref{estcr4} and \eqref{energcr},
we also have $u_n\to \bu$ in $L^2(\O)$.

\paragraph{Convergence Step \ref{step3}}

Assume now that 
\begin{equation}\label{asscr}
\nabla_b u_n\to \nabla \bu\mbox{ weakly in $L^2(\O)^d$ as $n\to\infty$.}
\end{equation}
For $\varphi\in C^\infty_c(\O)$ we define the interpolant $v_n\in Y_{\mathcal T_n}$
by $(v_n)_\sigma=\varphi(\overline{x}_\sigma)$. The smoothness of $\varphi$ ensures
that $v_n\to\varphi$ in $L^\infty(\O)$ and $\nabla_b v_n\to\nabla\varphi$ in $L^\infty(\O)^d$.
The convergence \eqref{asscr} therefore allows us to pass to the limit in
\eqref{basecr} written for $u_n$ and $v_n$. We deduce that $\bu\in H^1_0(\O)$ satisfies 
$\int_\O A\nabla\bu\cdot\nabla\varphi d x=\int_\O f\varphi d x$
for all smooth $\varphi$, which is equivalent to \eqref{basew}.

The proof of \eqref{asscr} relies on well-established techniques. By \eqref{energcr}
the sequence $(\nabla_b u_n)_{n\in\N}$ is bounded, and therefore converges weakly in $L^2(\O)^d$
to some $\bchi$,
up to a subsequence. We just need to prove that $\bchi= \nabla\bu$. Take $\bpsi\in C^\infty_c(\O)^d$ and, by Stokes' formula in each triangle,
\begin{multline}
\int_\O \nabla_b u_n(x)\cdot\bpsi(x)d x=\sum_{K\in\mathcal T_n}\int_K \nabla_b u_n(x)\cdot\bpsi(x)d x
\\
=\sum_{K\in\mathcal T_n}\int_{\partial K} (u_n)_{|K}(x) \n_K\cdot\bpsi(x)d S(x)
-\sum_{K\in\mathcal T_n}\int_K u_n(x) \div\bpsi(x)d x\\
=Z_n -\int_\O u_n(x) \div\bpsi(x)d x,
\label{estcr33}\end{multline}
where $\n_K$ is the outer normal to $K$ and $(u_n)_{|K}$ denotes values on $\sigma$
from $K$. Since $\bpsi=0$ on $\partial\O$ and
$\bpsi\cdot\n_K+ \bpsi\cdot\n_L=0$ on the interface $\sigma$ between $K$ and $L$, we have
\begin{multline*}
\sum_{K\in\mathcal T_n}\sum_{\sigma\in\mathcal E_K}\int_{\sigma} (u_n)_\sigma \n_K\cdot\bpsi(x)d S(x)
\\
=\sum_{\sigma\in\mathcal E,\,\sigma\subset\O} \int_\sigma (u_n)_\sigma (\n_K\cdot\bpsi(x)
+\n_L\cdot\bpsi(x))d S(x)=0.
\end{multline*}
and thus
\[
Z_n=\sum_{K\in\mathcal T_n}\sum_{\sigma\in\mathcal E_K}\int_{\sigma} [(u_n)_{|K}(x)-(u_n)_\sigma] \n_K\cdot\bpsi(x)d S(x).
\]
By definition of $(u_n)_{\sigma}$ we have $\int_\sigma [(u_n)_{|K}(x)-(u_n)_\sigma]d S(x)=0$.
Using $|(u_n)_{|K}-(u_n)_\sigma|\le {\rm diam}(K) |(\nabla_b u_n)_{|K}|$ and
the smoothness of $\bpsi$, we infer
\begin{multline*}
|Z_n|=\left|\sum_{K\in\mathcal T_n}\sum_{\sigma\in\mathcal E_K}\int_{\sigma} [(u_n)_{|K}(x)-(u_n)_\sigma] \n_K\cdot[\bpsi(x)
-\bpsi(\overline{x}_\sigma)]d S(x)\right|\\
\le C_\bpsi h_{\mathcal M_n}\sum_{K\in\mathcal T_n}\sum_{\sigma\in\mathcal E_K}|\sigma| h_K |(\nabla_b u_n)_{|K}|
\le 3C_\bpsi \cter{cstcr44} h_{\mathcal M_n} ||\,|\nabla_b u_n|\,||_{L^1(\O)}
\end{multline*}
with $\ctel{cstcr44}$ not depending on $n$ (we used the regularity assumption on $\mathcal T_n$ to write $|\sigma|h_K\le \cter{cstcr44}|K|$).
Invoking the discrete energy estimate \eqref{energcr}, we deduce that $Z_n\to 0$ and we
therefore evaluate the limit of \eqref{estcr33} since $u_n\to u$ in $L^2(\O)$
and $\nabla_b u_n\to \bchi$ weakly in $L^2(\O)^d$. This gives 
$\int_\O \bchi(x)\cdot\bpsi(x)d x=-\int_\O \bu(x)\div \bpsi(x)d x$,
which proves that $\bchi=\nabla\bu$ as required.

\section{Extension to non-linear models}\label{sec:nl}

The previous technique, based on the convergence steps \ref{step1}--\ref{step3} and
on the discrete Rellich's theorem and the discrete Poincare's inequality, would not be very useful
if it only applied to the linear diffusion equation \eqref{base}. Convergence of
numerical methods for this equation is well-known, and best obtained through
error estimates. The power of the compactness techniques presented above is
that they seamlessly apply to non-linear models, including models of physical
relevance such as oil recovery and the Navier--Stokes equations.
Presenting a complete review of these techniques on such models is beyond the
scope of this article, but we can give an overview of some of the latest developments
in this area.

\subsection{Stationary equations}

\subsubsection{Academic example}

We first show with an academic example how to apply the previous techniques
to a non-linear model. We consider
\begin{equation}\label{basenl}
\left\{
\begin{array}{ll}
-\div(A(\cdot,\bu)\nabla \bu)=F(\bu)&\mbox{ in $\O$},\\
\bu=0&\mbox{ on $\partial\O$}
\end{array}\right.
\end{equation}
where $F:\R\mapsto\R$ is continuous and bounded, and $A:\O\times \R\mapsto \mathcal M_d(\R)$
is a Caratheodory function (measurable with respect to $x\in\O$, continuous with respect to $s\in\R$)
such that for all $s\in\R$ the function $A(\cdot,s)$ satisfies \eqref{hypa} with $\underline{a}$
and $\overline{a}$ not depending on $s$. The weak form of \eqref{basenl} consists
of \eqref{basew} with $f(x)$ and $A(x)$ replaced with
$F(\bu(x))$ and $A(x,\bu(x))$, respectively.

As in the linear model case, establishing the convergence of a numerical method
for \eqref{basenl} by using discrete functional analysis techniques consists of mimicking estimates
on the continuous equation. Here, these estimates are obtained as for the
linear model; substituting $v=\bu$ in the weak form of \eqref{basenl} and using
the coercivity of $A$, the bound on $F$ and Poincar\'e's inequality,
it is seen that $\bu$ satisfies
\[
||\bu||_{H^1_0(\O)}\le {\mbox{diam}(\O)}{\underline{a}}^{-1}|\O|^{1/2}||F||_{L^\infty(\R)}.
\]

Writing a numerical method for \eqref{basenl} using a method for the linear
equation \eqref{base} is usually quite straightforward: all $f(x)$ and $A(x)$ appearing in the definition
of the method (e.g. through $\tau_\sigma$ for the \textsc{tpfa} method) have to be replaced with $F(u(x))$
and $A(x,u(x))$, where $u$ is the approximation sought through the scheme. A quick inspection
of Convergence Steps \ref{step1} in Sections \ref{sec:tpfa} and \ref{sec:cr} shows that
the discrete energy estimates \eqref{energtpfa}, \eqref{energcr} and \eqref{energcr2}
hold with $||f||_{L^2(\O)}$ replaced with $|\Omega|^{1/2}||F||_{L^\infty(\R)}$.

Convergence Step \ref{step2} then follows from Theorem \ref{th-discR} exactly as in the linear case,
and we find $\bu\in H^1_0(\O)$ such that up to a subsequence $u_n\to \bu$ in $L^2(\O)$.
This ensures that $F(u_n)\to F(\bu)$ in $L^2(\O)$,
and that up to a subsequence $A(\cdot,u_n)\to A(\cdot,u)$ almost everywhere while remaining uniformly bounded.
These convergences enable us to evaluate the limit of the scheme by
following the exact same technique as in Convergence Steps \ref{step3} for the linear model.
This establishes that $\bu$ is a weak solution of \eqref{basenl}.

\begin{remark} Although the strong convergence of $u_n$ to $\bu$ is not necessary in the linear case (weak convergence would suffice),
it is essential for non-linear models such as \eqref{basenl}. Indeed, if $(u_n)_{n\in\N}$ only
converges weakly, then $F(u_n)$ and $A(\cdot,u_n)$ may not converge to the correct
limits $F(\bar u)$ and $A(\cdot,\bar u)$.
\end{remark}

\subsubsection{Physical models}

As mentioned in the introduction, the strength of a convergence analysis via
compactness techniques is that it applies to fully non-linear models that are relevant
in a number of applications.

\paragraph{Elliptic equations with measure data}

Equations of the form \eqref{base} appear in models of
oil recovery, in which $f$ models
wells. The relative scales of the reservoir and the wellbores justifies
taking a Radon measure for this source term \cite{fg00,DT}. The ensuing analysis is
more complex. To start with, the weak formulation \eqref{basew} is no longer suitable
\cite{BBGGPV95,DMOP99}. Moreover, due to the singularity of the source term, the
solution has very weak regularity properties, and may not be unique. This prevents
any proof of error estimates for numerical approximations of these models.

Discrete functional analysis tools were developed to establish the convergence of the
\textsc{tpfa} finite volume scheme for diffusion and (possibly non-coercive) convection--diffusion
equations with measures as source terms \cite{GAL99,DRO03}.  
Key elements to obtaining a priori estimates on the solutions to these
equations are the Sobolev spaces $W^{1,p}_0(\O)$ (which is $H^1_0(\O)$ if $p=2$),
and the Sobolev embeddings. The corresponding numerical analysis requires
the discrete $W^{1,p}_0$ norm on $X_{\mathcal M}$
\[
||v||_{W^{1,p}_0,\mathcal M}^p:=\sum_{\sigma\in\mathcal E_{\mathcal M}} 
|\sigma| d_{\sigma}\left(\frac{v_K-v_L}{d_{\sigma}}\right)^p,
\]
to generalise the discrete Poincar\'e's and Rellich's theorems to this norm,
and to establish discrete Sobolev embeddings: if $p\in (1,d)$ and $q\le \frac{dp}{d-p}$
then
\begin{equation}\label{sobo}
||v||_{L^q(\O)}\le C||v||_{W^{1,p}_0,\mathcal M}.
\end{equation}

\begin{remark} The most efficient proofs of the discrete Poincar\'e's and Rellich's theorems 
actually use the discrete Sobolev embeddings \cite{EYM10,koala}.
\end{remark}

\begin{remark} The numerical study of \eqref{base} with $f$ measure is currently
(mostly) limited to the \textsc{tpfa} scheme, since no other method 
has in general the
structure that enables the mimicking of the continuous estimates \cite{review}.
\end{remark}




\paragraph{Leray--Lions and $p$-Laplace equations}

These models are non-linear gene\-ralisations of \eqref{base}, that appear in models of gaciology
\cite{GR}. They have a more severe non-linearity than \eqref{basenl}, since they
involve both $\bu$ and $\nabla\bu$. The general form of these equations is obtained by
replacing $\div(A\nabla \bu)$ in \eqref{base} with $\div(a(\cdot,\bu,\nabla\bu))$,
where $a:\O\times \R\times\R^d\mapsto\R^d$ satisfies growth, monotony and coer\-civity assumptions.
The simplest form is probably the $p$-Laplace equation $-\div(|\nabla\bu|^{p-2}\nabla \bu)=f$ for $p\in (1,\infty)$.

Uniqueness may fail for these equations \cite[Remark 3.4]{DRO12}, which completely prevents classical error estimates for their numerical approximations.
Compactness techniques were used to study the convergence of at least three different
schemes for Leary--Lions equations: the mixed finite volume method \cite{DRO06-II},
the discrete duality finite volume method \cite{AND07}, and a cell-centred finite volume scheme
\cite{EYM09}. These studies make use of discrete scheme-dependent $W^{1,p}_0$ norms and
related discrete Rellich's and Poincar\'e's theorems. They also require an (easy) adaptation
to the discrete setting of Minty's monotony method, to deal with the non-linearity
involving $\nabla\bu$.

\subsection{Time-dependent and Navier--Stokes equations}

Studying non-linear time-dependent models requires space--time compactness results.
In the context of Sobolev spaces, these results are usually 
variants of the Aubin--Simon theorem \cite{aubin,simon} which, roughly speaking,
ensures the compactness in $L^p(\O\times (0,T))$ of a sequence $(u_n)_{n\in\N}$ provided
that $(\nabla u_n)_{n\in\N}$ is bounded in $L^p(\O\times (0,T))^d$ and that $(\partial_t u_n)_{n\in\N}$ is bounded
in $L^q(0,T;W^{-1,r}(\O))$, where $W^{-1,r}(\O)=(W^{1,r'}_0(\O))'$.
These are natural spaces in which solutions to parabolic \textsc{pde}s can be estimated.

Carrying out the numerical analysis of these equations with irregular
data necessitates the development of discrete versions of the Aubin--Simon theorem; this often
includes designing a discrete dual norm mimicking the norm in $W^{-1,r'}(\O)$. This analysis
has been done for various schemes and models: transient Leray--Lions equations \cite{DRO12}, including non-local dependencies of $a(x,\bu,\nabla\bu)$ with respect to $\bu$
(as in image segmentation \cite{EYM11-2}); a model of miscible fluid flows in porous media from
oil recovery \cite{CHA07,CKM15}; Stefan's model of melting material \cite{EYM13-2};
Richards' model and multi-phase flows in porous media \cite{zamm2013}.
Discrete Aubin--Simon theorems also sometimes need to be completed with other
compactness results, such as compactness results involving
sequences of discrete spaces \cite{gal-12-com}, or discrete compensated compactness theorems \cite{DE15} to deal with degenerate parabolic \textsc{pde}s.

All these compactness results only provide strong convergence in
a space--time averaged norm (e.g. $L^p(\O\times (0,T))$ for some $p<\infty$).
However, Droniou et al. \cite{DE15,DE-fvca7} recently developed a technique
to establish a uniform-in-time convergence result (i.e. in $L^\infty(0,T;L^2(\O))$) by combining the initial averaged convergences,
energy estimates from the \textsc{pde}, and a discontinuous weak Ascoli--Arzela theorem.
This strong uniform convergence corresponds to the needs of end-users, who are usually more interested in the behaviour of the solution at the final time rather than averaged over time.

\paragraph{Navier--Stokes equations}

The regularity and uniqueness of the solution to Navier--Stokes equations is a famous open problem.
Therefore, as explained in the introduction, the convergence analysis of numerical schemes for
these equations cannot be based on error estimates.
If it is to be rigorously carried out under reasonable physical assumptions, 
this convergence analysis can only be done
through compactness techniques.

Let us first consider the continuous case.
Because of the term $(\bu\cdot\nabla)\bu$ in 
\begin{equation}\label{eq:ns}
\partial_t \bu -\Delta \bu + (\bu\cdot\nabla) \bu +\nabla\overline{p}=f,
\end{equation}
evaluating the limit from a sequence of approximate solutions $(u_n)_{n\in\N}$ requires a strong
space--time $L^2$ compactness on $(u_n)_{n\in\N}$ (since
$(\nabla u_n)_{n\in\N}$ converges only in $L^2(\O\times (0,T))^d$-weak). Kolmogorov's
theorem ensures this strong compactness on $(u_n)_{n\in\N}$ provided that we can
control the space-translates and time-translates of the functions. The space translates
are naturally estimated thanks to the bound on $(\nabla u_n)_{n\in\N}$, and the
the time translates $||u_n(\cdot+\tau,\cdot)-u_n||_{L^1(0,T;L^2(\O))}$ are
estimated by
\[
\int_\O |u_n(t+\tau,x)-u_n(t,x)|^2d x
=\int_\O \int_t^{t+\tau} \partial_t u_n(s,x) (u_n(t+\tau,x)-u_n(t,x))d xd s.
\]
Equation \eqref{eq:ns} is then used to substitute $\partial_t u_n$ in terms of $u_n$
and its space derivatives (since $\div u_n=0$, the term involving $\nabla {p}_n$ disappears).
Bounding the term $(u_n\cdot\nabla )u_n\times u_n$ that appears after this substitution
requires Sobolev estimates on $(u_n)_{n\in\N}$; these ensure that, only considering the space
integral, $u_n\in L^6(\O)$ and thus $|u_n|^2 |\nabla u_n|\in L^{6/5}(\O)$
(wihout Sobolev estimates, $u_n\in L^2(\O)$ and $|u_n|^2|\nabla u_n|$ is not even integrable).

The same issue arises in the convergence analysis of numerical methods for Navier--Stokes
equations. Discrete Sobolev estimates of the kind \eqref{sobo} are required to estimate
the time-translates of the approximate solutions and ensure the convergence towards
the correct model. Droniou and Eymard \cite{DRO09} did this for the mixed finite volume method, and Chenier et al. \cite{CEGH} considered an extension of the marker-and-cell (\textsc{mac}) scheme; both references
establish more scheme-specific Sobolev embeddings than \eqref{sobo}, but this
general inequality is actually sufficient for the analyses carried out in these works.

\section{Conclusions and perspectives}\label{sec:ccl}

We presented techniques that enable the convergence analysis of numerical
schemes for \textsc{pde}s under assumptions that are compatible with field applications.
In particular, discontinuous coefficients or fully non-linear physically relevant
models can be handled. These techniques do not require the uniqueness or regularity of the
solutions, and are based on discrete functional analysis tools -- that is the translation
to the discrete setting of the functional analysis used in the study of the
\textsc{pde}s.

These discrete tools were adapted to a number of schemes, including
the hybrid mixed mimetic family \cite{DRO10} (which contains the hybrid
finite volumes \cite{EYM10}, the mimetic finite differences \cite{BRE05-II}, and the mixed finite
volumes \cite{DRO06}), the discrete duality finite volumes \cite{AND07},
the discontinuous Galerkin methods \cite{DPE12}.

It might appear from our brief introduction that the discrete
Sobolev norms and all related results (Poincar\'e, Rellich, etc.)
require specific adaptations for each scheme or model. This is usually not the case.
A framework was recently designed, the gradient scheme framework \cite{EYM12,DRO12,koala},
that enables the unified convergence analysis of many different schemes for many
diffusion \textsc{pde}s. The idea is to identify a set of five properties that are not related to any
model, but are intrinsic to the discrete space and operators (gradient, etc.) of the numerical
methods; convergence proofs of numerical approximations of many different models can be
carried out based on these five properties only (sometimes even fewer). Generic discrete functional analysis tools exist to ensure that
several well-known schemes -- including meshless methods -- satisfy these properties \cite{DEH15}, and therefore that the aforementioned
convergence results apply to these schemes. 
The gradient scheme framework covers several boundary conditions, and also guided the design
of new schemes \cite{EYM12,DEF14}.

\bibliographystyle{plain}
\bibliography{ctac14-proceedings-droniou}

\begin{thebibliography}{10}

\bibitem{AND07}
B.~Andreianov, F.~Boyer, and F.~Hubert.
\newblock Discrete duality finite volume schemes for {L}eray-{L}ions-type
  elliptic problems on general 2{D} meshes.
\newblock {\em Numer. Methods Partial Differential Equations}, 23(1):145--195,
  2007.
\newblock DOI: 10.1002/num.20170.

\bibitem{aubin}
J-.P. Aubin.
\newblock Un th\'eor\`eme de compacit\'e.
\newblock {\em C. R. Math. Acad. Sci. Paris}, 256:5042--5044, 1963.

\bibitem{BBGGPV95}
P.~B{\'e}nilan, L.~Boccardo, T.~Gallou{\"e}t, R.~Gariepy, M.~Pierre, and J.~L.
  V{\'a}zquez.
\newblock An {$L^1$}-theory of existence and uniqueness of solutions of
  nonlinear elliptic equations.
\newblock {\em Ann. Scuola Norm. Sup. Pisa Cl. Sci. (4)}, 22(2):241--273, 1995.
\newblock URL: http://www.numdam.org/item?id=ASNSP\_1995\_4\_22\_2\_241\_0.

\bibitem{brezis}
H.~Brezis.
\newblock {\em Functional analysis, {S}obolev spaces and partial differential
  equations}.
\newblock Universitext. Springer, New York, 2011.

\bibitem{BRE05-II}
F.~Brezzi, K.~Lipnikov, and V.~Simoncini.
\newblock A family of mimetic finite difference methods on polygonal and
  polyhedral meshes.
\newblock {\em Math. Models Methods Appl. Sci.}, 15(10):1533--1551, 2005.
\newblock DOI: 10.1142/S0218202505000832.

\bibitem{CHA07}
C.~Chainais-Hillairet and J.~Droniou.
\newblock Convergence analysis of a mixed finite volume scheme for an
  elliptic-parabolic system modeling miscible fluid flows in porous media.
\newblock {\em SIAM J. Numer. Anal.}, 45(5):2228--2258, 2007.
\newblock DOI: 10.1137/060657236.

\bibitem{CKM15}
C.~Chainais-Hillairet, S.~Krell, and A.~Mouton.
\newblock Convergence analysis of a {DDFV} scheme for a system describing
  miscible fluid flows in porous media.
\newblock {\em Numer. Methods Partial Differential Equations}, 31(3):723--760,
  2015.
\newblock DOI: 10.1002/num.21913.

\bibitem{CEGH}
E.~Ch\'enier, R.~Eymard, T.~Gallou\"et, and R.~Herbin.
\newblock An extension of the {MAC} scheme to locally refined meshes:
  convergence analysis for the full tensor time-dependent {N}avier--{S}tokes
  equations.
\newblock {\em Calcolo}, 52(1):69--107, 2015.
\newblock DOI: 10.1007/s10092-014-0108-x.

\bibitem{CR}
M.~Crouzeix and P.-A. Raviart.
\newblock Conforming and nonconforming finite element methods for solving the
  stationary {S}tokes equations. {I}.
\newblock {\em Rev. Fran\c caise Automat. Informat. Recherche Op\'erationnelle
  S\'er. Rouge}, 7(R-3):33--75, 1973.

\bibitem{DMOP99}
G.~Dal~Maso, F.~Murat, L.~Orsina, and A.~Prignet.
\newblock Renormalized solutions of elliptic equations with general measure
  data.
\newblock {\em Ann. Scuola Norm. Sup. Pisa Cl. Sci. (4)}, 28(4):741--808, 1999.
\newblock URL: http://www.numdam.org/item?id=ASNSP\_1999\_4\_28\_4\_741\_0.

\bibitem{DIP13-2}
D.~Di~Pietro and M.~Vohralik.
\newblock A review of recent advances in discretization methods, a posteriori
  error analysis, and adaptive algorithms for numerical modeling in
  geosciences.
\newblock {\em Oil \& Gas Science and Technology}, 69(4):701--730, 2014.
\newblock DOI: 10.2516/ogst/2013158.

\bibitem{DPE12}
D.~A. Di~Pietro and A.~Ern.
\newblock {\em Mathematical aspects of discontinuous {G}alerkin methods},
  volume~69 of {\em Mathematics \& Applications (Berlin)}.
\newblock Springer, Heidelberg, 2012.
\newblock DOI: 10.1007/978-3-642-22980-0.

\bibitem{DRO06-II}
J.~Droniou.
\newblock Finite volume schemes for fully non-linear elliptic equations in
  divergence form.
\newblock {\em M2AN Math. Model. Numer. Anal.}, 40(6):1069--1100 (2007), 2006.
\newblock DOI: 10.1051/m2an:2007001.

\bibitem{review}
J.~Droniou.
\newblock Finite volume schemes for diffusion equations: introduction to and
  review of modern methods.
\newblock {\em Math. Models Methods Appl. Sci. (M3AS)}, 24(8):1575--1619, 2014.
\newblock DOI: 10.1142/S0218202514400041.

\bibitem{DRO06}
J.~Droniou and R.~Eymard.
\newblock A mixed finite volume scheme for anisotropic diffusion problems on
  any grid.
\newblock {\em Numer. Math.}, 105(1):35--71, 2006.
\newblock DOI: 10.1007/s00211-006-0034-1.

\bibitem{DRO09}
J.~Droniou and R.~Eymard.
\newblock Study of the mixed finite volume method for {S}tokes and
  {N}avier--{S}tokes equations.
\newblock {\em Numer. Methods Partial Differential Equations}, 25(1):137--171,
  2009.
\newblock DOI: 10.1002/num.20333.

\bibitem{DE15}
J.~Droniou and R.~Eymard.
\newblock Uniform-in-time convergence of numerical methods for non-linear
  degenerate parabolic equations.
\newblock {\em Numer. Math.}, 2015.
\newblock DOI: 10.1007/s00211-015-0733-6.

\bibitem{DEF14}
J.~Droniou, R.~Eymard, and P.~F\'eron.
\newblock Gradient schemes for {S}tokes problem.
\newblock {\em IMA J. Numer. Anal.}, 2015.
\newblock To appear.

\bibitem{koala}
J.~Droniou, R.~Eymard, T.~Gallou\"et, C.~Guichard, and R.~Herbin.
\newblock Gradient schemes for elliptic and parabolic problems.
\newblock 2015.
\newblock In preparation.

\bibitem{DRO10}
J.~Droniou, R.~Eymard, T.~Gallou{\"e}t, and R.~Herbin.
\newblock A unified approach to mimetic finite difference, hybrid finite volume
  and mixed finite volume methods.
\newblock {\em Math. Models Methods Appl. Sci.}, 20(2):265--295, 2010.
\newblock DOI: 10.1142/S0218202510004222.

\bibitem{DRO12}
J.~Droniou, R.~Eymard, T.~Gallou{\"e}t, and R.~Herbin.
\newblock Gradient schemes: a generic framework for the discretisation of
  linear, nonlinear and nonlocal elliptic and parabolic equations.
\newblock {\em Math. Models Methods Appl. Sci. (M3AS)}, 23(13):2395--2432,
  2012.
\newblock DOI: 10.1142/S0218202513500358.

\bibitem{DE-fvca7}
J.~Droniou, R.~Eymard, and C.~Guichard.
\newblock Uniform-in-time convergence of numerical schemes for {R}ichards' and
  {S}tefan's models.
\newblock In M.~Ohlberger J.~Fuhrmann and C.~Rohde Eds., editors, {\em Finite
  Volumes for Complex Applications VII -- Methods and Theoretical Aspects},
  volume~77, pages 247--254. Springer, 2014.
\newblock DOI: 10.1007/978-3-319-05684-5\_23.

\bibitem{DEH15}
J.~Droniou, R.~Eymard, and R.~Herbin.
\newblock Gradient schemes: generic tools for the numerical analysis of
  diffusion equations.
\newblock {\em M2AN Math. Model. Numer. Anal.}, 2015.
\newblock To appear.

\bibitem{DRO03}
J.~Droniou, T.~Gallou{\"e}t, and R.~Herbin.
\newblock A finite volume scheme for a noncoercive elliptic equation with
  measure data.
\newblock {\em SIAM J. Numer. Anal.}, 41(6):1997--2031, 2003.
\newblock DOI: 10.1137/S0036142902405205.

\bibitem{DPP03}
J.~Droniou, A.~Porretta, and A.~Prignet.
\newblock Parabolic capacity and soft measures for nonlinear equations.
\newblock {\em Potential Anal.}, 19(2):99--161, 2003.
\newblock DOI: 10.1023/A:1023248531928.

\bibitem{DT}
J.~Droniou and K.~S. Talbot.
\newblock On a miscible displacement model in porous media flow with measure
  data.
\newblock {\em SIAM J. Math. Anal.}, 46(5):3158--3175, 2014.
\newblock DOI: 10.1137/130949294.

\bibitem{EYM13-2}
R.~Eymard, P.~F\'eron, T.~Gallou\"et, R.~Herbin, and C.~Guichard.
\newblock Gradient schemes for the {S}tefan problem.
\newblock {\em IJFV International Journal On Finite Volumes}, 10, 2013.
\newblock URL: http://www.i2m.univ-amu.fr/IJFV/spip.php?article47.

\bibitem{EGH00}
R.~Eymard, T.~Gallou{\"e}t, and R.~Herbin.
\newblock Finite volume methods.
\newblock In P.~G. Ciarlet and J.-L. Lions, editors, {\em Techniques of
  Scientific Computing, Part III}, Handbook of Numerical Analysis, VII, pages
  713--1020. North-Holland, Amsterdam, 2000.

\bibitem{EYM09}
R.~Eymard, T.~Gallou{\"e}t, and R.~Herbin.
\newblock Cell centred discretisation of non linear elliptic problems on
  general multidimensional polyhedral grids.
\newblock {\em J. Numer. Math.}, 17(3):173--193, 2009.
\newblock DOI: 10.1515/JNUM.2009.010.

\bibitem{EYM10}
R.~Eymard, T.~Gallou{\"e}t, and R.~Herbin.
\newblock Discretization of heterogeneous and anisotropic diffusion problems on
  general nonconforming meshes {SUSHI}: a scheme using stabilization and hybrid
  interfaces.
\newblock {\em IMA J. Numer. Anal.}, 30(4):1009--1043, 2010.
\newblock DOI: 10.1093/imanum/drn084.

\bibitem{EYM12}
R.~Eymard, C.~Guichard, and R.~Herbin.
\newblock Small-stencil 3{D} schemes for diffusive flows in porous media.
\newblock {\em ESAIM Math. Model. Numer. Anal.}, 46(2):265--290, 2012.
\newblock DOI: 10.1051/m2an/2011040.

\bibitem{zamm2013}
R.~Eymard, C.~Guichard, R.~Herbin, and R.~Masson.
\newblock Gradient schemes for two-phase flow in heterogeneous porous media and
  {R}ichards equation.
\newblock {\em ZAMM Z. Angew. Math. Mech.}, 94(7-8):560--585, 2014.
\newblock DOI: 10.1002/zamm.201200206.

\bibitem{EYM11-2}
R.~Eymard, A.~Handlovi{\v{c}}ov{\'a}, R.~Herbin, K.~Mikula, and
  O.~Sta{\v{s}}ov{\'a}.
\newblock Gradient schemes for image processing.
\newblock In {\em Finite volumes for complex applications {VI} - Problems \&
  Perspectives}, volume~4 of {\em Springer Proc. Math.}, pages 429--437.
  Springer, Heidelberg, 2011.
\newblock DOI: 10.1007/978-3-642-20671-9\_45.

\bibitem{fg00}
P.~Fabrie and T.~Gallou\"et.
\newblock Modelling wells in porous media flow.
\newblock {\em Math. Models Methods Appl. Sci.}, 10(5):673--709, 2000.
\newblock DOI: 10.1142/S0218202500000367.

\bibitem{phdfaille}
I.~Faille.
\newblock {\em Mod\'elisation bidimensionnelle de la gen\`ese et de la
  migration des hydrocarbures dans un bassin s\'edimentaire}.
\newblock PhD thesis, Universit\'e Joseph Fourier -- Grenoble 1, 1992.

\bibitem{F95}
X.~Feng.
\newblock On existence and uniqueness results for a coupled system modeling
  miscible displacement in porous media.
\newblock {\em J. Math. Anal. Appl.}, 194(3):883--910, 1995.
\newblock DOI: 10.1006/jmaa.1995.1334.

\bibitem{GAL99}
T.~Gallou{\"e}t and R.~Herbin.
\newblock Finite volume approximation of elliptic problems with irregular data.
\newblock In {\em Finite volumes for complex applications {II}}, pages
  155--162. Hermes Sci. Publ., Paris, 1999.

\bibitem{gal-12-com}
T.~Gallou{\"e}t and J.~C. Latch{\'e}.
\newblock Compactness of discrete approximate solutions to parabolic {PDE}s --
  application to a turbulence model.
\newblock {\em Commun. Pure Appl. Anal}, 12(6):2371--2391, 2012.
\newblock DOI: 10.3934/cpaa.2012.11.2371.

\bibitem{GR}
R.~Glowinski and J.~Rappaz.
\newblock Approximation of a nonlinear elliptic problem arising in a
  non-newtonian fluid flow model in glaciology.
\newblock {\em M2AN Math. Model. Numer. Anal.}, 37(1):175--186, 2003.
\newblock DOI: 10.1051/m2an:2003012.

\bibitem{simon}
J.~Simon.
\newblock Compact sets in {$L^p(0,T;B)$}.
\newblock {\em Annali Mat. Pura appl. (IV)}, CXLVI:65--96, 1987.
\newblock DOI: 10.1007/BF01762360.

\end{thebibliography}

\end{document}